\newcommand{\fin}{\hfill$\square$}
\newtheorem{theo}{Théorème}[section]
\newtheorem{cor}[theo]{Corollaire}
\newtheorem{lemm}[theo]{Lemme}
\newtheorem{defi}[theo]{Définition}
\newtheorem{prop}[theo]{Proposition}
\newtheorem{rmq}[theo]{Remarque}
\title{Sur la géométrie de la singularité initiale
des espaces-temps plats globalement hyperboliques}
\author{Mehdi Belraouti}
\begin{document}

\maketitle

\noindent{\bf Résumé}
On étudie le comportement asymptotique des niveaux d'une fonction temps quasi-concave, définie sur un espace-temps globalement hyperbolique maximal plat de dimension trois, admettant une hypersurface de Cauchy de genre $\geq 2$. On donne une réponse positive à une conjecture posée par Benedetti et Guadagnini dans \cite{MR1857817}. Plus précisément, on montre que les niveaux d'une telle fonction temps convergent au sens de la topologie de Hausdorff-Gromov équivariante vers un arbre réel. On montre de plus que la limite est indépendante de la fonction temps choisie.

\vskip 0.5\baselineskip
\noindent{\bf Abstract}
Let $M$ be a maximal globally hyperbolic Cauchy compact flat spacetime of dimension $2+1$, admitting a Cauchy hypersurface diffeomorphic to a compact hyperbolic manifold. 
We study the asymptotic behaviour of level sets of quasi-concave time functions on $M$. We give a positive answer to a conjecture of Benedetti and Guadagnini in \cite{MR1857817}. More precisely, we prove that the level sets of such a time function converge in the Hausdorff-Gromov equivariant topology to a real tree. Moreover, this limit does not depend on the choice of the time function.

\section{Introduction}

Un espace-temps est une variété lorentzienne orientée chronologiquement orientée. Il est dit globalement hyperbolique et on écrit $GH$, s'il admet une hypersurface de Cauchy i.e une hypersurface spatiale plongée, rencontrant toutes les courbes causales inextensibles une et une seule fois. Si de plus elle est compacte, alors toutes les autres hypersurfaces de Cauchy le sont et dans ce cas on dit que l'espace-temps $GH$ est Cauchy compact et on écrit $GHC$. Il est dit plat s'il est localement isométrique à l'espace de Minkowski i.e muni d'une $(G,X)$ structure où $X$ est l'espace de Minkowski de dimension $n+1$ (not\'e $\mathbb{R}^{1,n}$) et $G$ le groupe de Poincar\'e, \textit{i.e.} le groupe des isom\'etries affines de $\mathbb{R}^{1,n}$.

Un résultat classique et bien connu de Geroch montre l'équivalence entre l'existence d'une hypersurface de Cauchy (compacte) et celle d'une fonction temps de Cauchy (propre); ceci implique qu'un espace-temps $GH$ admet une décomposition lisse $M\approx S\times I$ où $I$ est un intervalle de $\mathbb{R}$. Un plongement isométrique dans un autre espace-temps est dit de Cauchy s'il existe une hypersurface de Cauchy dont l'image est une hypersurface de Cauchy. Un espace-temps $(M,g)$ globalement hyperbolique plat est dit maximal si tout plongement de Cauchy dans un autre espace-temps $(N,h)$ globalement hyperbolique plat est surjectif. Dans ce cas on écrit $MGHC$.

Depuis les travaux de Mess \cite{mess1}, Barbot \cite{MR2110829}, et Bonsante  \cite{bonsantethese}, \cite{MR2170277}, \cite{MR2499272}, les espaces-temps $MGHC$ plats sont bien connus y compris en dimension quelconque. Quitte à inverser l'orientation temporelle, on peut les supposer futur-complets, au sens o\`u toutes les g\'eod\'esiques de type temps orient\'ees vers le futur sont compl\`etes. Les espaces-temps $MGHC$ plats futur-complets de dimension $3$ sont caractérisés par la donnée d'une surface riemannienne hyperbolique $S$ et d'une lamination géodésique mesurée $\lambda$ sur $S$. Plus précisément, on associe à chaque  lamination géodésique mesurée $(S, \lambda)$ un espace-temps $MGHC$ de dimension trois $M(S,\lambda)$ dont les surfaces de Cauchy sont homéomorphes à $S$, et tout espace-temps plat $MGHC$ futur-complet de dimension trois et de surface de Cauchy de genre $\geq 2$ est obtenu de cette manière.

Dans le présent travail,  on s'intéresse aux fonctions temps sur un espace $M(S,\lambda)$ \textit{quasi-concaves,} \textit{i.e.} dont les niveaux sont convexes, au sens o\`u leurs futurs sont g\'eod\'esiquement convexes. Un exemple important est celui du temps cosmologique, défini de la manière suivante:  Le temps cosmologique d'un point $p$ est le supremum des longueurs des courbes temporelles passées issues de $p$. D'autres exemples sont le temps $CMC$ et le $K$-temps, c'est-\`a-dire les fonctions temps dont les niveaux
sont \`a courbure moyenne constante pour la premi\`ere, \`a courbure de Gauss constante pour la seconde\footnote{L'existence et l'unicit\'e de ces fonctions temps particuli\`eres  sont assurées par les travaux  de F. B\'eguin, T. Barbot et A. Zeghib \cite{2006math......4486A}, \cite{2008arXiv0804.1053B}.}. Un premier résultat est obtenu par Benedetti et Guadagnini dans \cite{MR1857817}, où ils affirment que la singularité initiale (au sens du temps cosmologique \cite{MR1606594}, \cite{MR2170277}) définie comme étant l'ensemble des points du bord par lesquels passe au moins un hyperplan support de type espace, est un arbre  réel dual à la lamination $\lambda$ (cf. section~\ref{sec:singu}). Nous étudions ici le comportement asymptotique des hypersurfaces de Cauchy vues non comme des variétés riemanniennes mais plutôt comme des espaces de longueurs uniquement géodésiques munis d'une action d'un groupe d'isométrie. Bonsante \cite{MR2170277} montre que les niveaux du temps cosmologique convergent au sens de la topologie spectrale vers la singularité initiale. Dans le cas où la lamination est une multicourbe L. Andersson \cite{MR2216148} a obtenu un résultat analogue pour le temps $CMC$. On généralise ces résultats aux  temps quasi-concaves quelconques.
Le résultat principal est le théorème suivant:

\begin{theo}\label{aa}

Soit $M(S, \lambda)$ un espace-temps $MGHC$ plat non élémentaire futur-complet. Soit $T$ une fonction temps de Cauchy quasi-concave $C^{2}$. Alors les niveaux de $T$ convergent
pour la topologie de Hausdorf Gromov équivariante vers un arbre réel dual à $(S, \lambda)$. En particulier la limite ne dépend pas de la fonction temps choisie.

\end{theo}

\section{Arbres réels dual à une lamination}

Un arbre réel est un espace géodésique $(T,d)$ dans lequel deux points quelconques sont joints par un unique arc, et cet arc est isométrique à un segment. On peut le voir aussi comme étant un espace uniquement géodésique tel que si $[x,y]$ est la géodésique reliant $x$ à $y$, alors tout point $z\in]x,y[$ déconnecte $x$ de $y$. Soit $\Gamma$ un groupe de type fini agissant par isométries sur $T$. Un élément $\gamma$ est dit \textit{elliptique} s'il a un point fixe. Il est dit \textit{hyperbolique} si $l_{T}(\gamma)=\inf_{x\in T}\left\{ d(x,\gamma.x) \right\}$ est atteint par un élément $x$. Un axe de translation d'un élément $\gamma$ est un segment isométrique à $\mathbb{R}$ sur lequel $\gamma$ agit par translation. Morgan et Shalen (\cite{MR769158}) ont montré que l'ensemble $A_{\gamma}=\left\{x\in T \mbox{~tel que~} d(x,\gamma.x)=l_{T}(\gamma)\right\}$ est un axe de translation  si et seulement si $\gamma$ est hyperbolique. L'action de $\Gamma$ sur $T$ est \textit{minimale} si le seul convexe $\Gamma$ invariant non vide est $T$. Elle est \textit{irréductible} s'il n'existe pas de bout fixé par $\Gamma$.

\begin{prop}{\em \cite[Proposition~4.2]{MR1007101}}.
Soit $T$ un arbre réel muni d'une action isométrique  de $\Gamma$. Alors, l'action de $\Gamma$ est minimale si et seulement si $T$ est l' union des axes de translations des \'el\'ements hyperboliques de $\Gamma$.\fin

\end{prop}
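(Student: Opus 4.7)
On prouve les deux implications séparément, en commençant par la plus facile.

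\textbf{Sens réciproque (union d'axes $\Rightarrow$ minimalité).} Soit $C \subset T$ un convexe $\Gamma$-invariant non vide. Fixons $\gamma \in \Gamma$ hyperbolique et $x \in C$. Par invariance, $\gamma \cdot x \in C$, et par convexité, le segment $[x, \gamma \cdot x]$ est inclus dans $C$. On observe ensuite que la projection $\pi(x)$ de $x$ sur l'axe $A_\gamma$ appartient à ce segment: en effet le chemin $[x,\pi(x)] \cup [\pi(x), \gamma \cdot \pi(x)] \cup [\gamma \cdot \pi(x), \gamma \cdot x]$ est de longueur $2 d(x, A_\gamma) + l_T(\gamma) = d(x, \gamma \cdot x)$, donc coïncide avec la géodésique. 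Ainsi $\pi(x) \in C$, et par itération, $\gamma^n \cdot \pi(x) \in C$ pour tout $n \in \mathbb{Z}$, ce qui par convexité entraîne $A_\gamma \subset C$. Comme $T$ est la réunion des axes, on en déduit $C = T$.

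\textbf{Sens direct (minimalité $\Rightarrow$ union d'axes).} On suppose l'action minimale et l'on pose $X = \bigcup_{\gamma \text{ hyperbolique}} A_\gamma$. L'ensemble $X$ est manifestement $\Gamma$-invariant puisque $g \cdot A_\gamma = A_{g\gamma g^{-1}}$. Sous l'hypothèse que l'action n'a pas de point fixe global (cas dégénéré exclu par minimalité si $T$ n'est pas un point), on commence par vérifier l'existence d'au moins un élément hyperbolique: sinon toutes les isométries seraient elliptiques et, par un argument classique sur les arbres réels, auraient un point fixe commun, fournissant un convexe $\Gamma$-invariant propre, contradiction. Il reste à montrer que $X$ est convexe dans $T$. Étant donnés deux éléments hyperboliques $\gamma$ et $\delta$, on invoque le fait classique que, selon la configuration relative des axes $A_\gamma$ et $A_\delta$, l'un des éléments $\gamma\delta$, $\gamma\delta^{-1}$, ou $\gamma^n\delta^m$ pour $n,m$ assez grands, est hyperbolique et son axe contient le pont géodésique reliant $A_\gamma$ à $A_\delta$. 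Ceci fournit un arc géodésique entièrement inclus dans $X$ reliant les deux axes, de sorte que $X$ est connexe par arcs géodésiques; comme sous-ensemble connexe d'un arbre réel contenant, pour toute paire de points, un segment géodésique les joignant, $X$ est convexe. Par minimalité on conclut $X = T$.

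\textbf{Point délicat.} L'obstacle principal est l'étape de convexité dans le sens direct, dont la démonstration rigoureuse repose sur une analyse cas par cas de la position relative des axes $A_\gamma$ et $A_\delta$ (disjoints, sécants en un segment, partageant un bout, etc.) et sur le calcul explicite de $l_T(\gamma\delta)$ via les formules de longueur de translation dans un arbre. Cette vérification technique est précisément ce pour quoi l'on renvoie à Culler--Morgan \cite{MR1007101}; dans la rédaction finale on se contentera d'en rappeler l'énoncé et d'invoquer la référence.
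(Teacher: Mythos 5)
The paper states this proposition without proof --- it is given purely as a citation of Culler--Morgan \cite{MR1007101} --- so there is no internal argument to compare yours against; I assess your sketch on its own merits, and its outline is the standard, correct one. Your reverse implication is complete: the identity $d(x,\gamma\cdot x)=2d(x,A_\gamma)+l_T(\gamma)$ does put the projection $\pi(x)$ on the segment $[x,\gamma\cdot x]$, and convexity together with the $\gamma$-orbit of $\pi(x)$ then forces $A_\gamma\subset C$. For the direct implication your reduction is right (the union $X$ of the axes is $\Gamma$-invariant, so it suffices to show it is nonempty and convex), but note that both substantive inputs are deferred: (i) the existence of at least one hyperbolic element in the absence of a global fixed point rests on Serre's lemma and therefore on the finite generation of $\Gamma$ --- a hypothesis the paper does make in its setup, but which you should invoke explicitly; (ii) the convexity of $X$ rests on the case analysis of $l_T(\gamma\delta)$ according to the relative position of $A_\gamma$ and $A_\delta$, which is precisely the technical core of Culler--Morgan and which you openly outsource. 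There is also a genuine degenerate case: if $T$ is reduced to a point (or more generally if $\Gamma$ fixes a point and $T$ is that point), the action is minimal while the union of axes is empty, so the statement implicitly assumes the action has no global fixed point; you gesture at this but should state the exclusion cleanly. Since the paper itself settles for the citation, your choice to give the skeleton and refer to \cite{MR1007101} for the axis-configuration analysis is a reasonable and even more informative treatment than the paper's; just make the finite-generation and non-degeneracy hypotheses explicit so the two implications are actually proved under the hypotheses you use.
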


\begin{lemm}{\em \cite[Lemme~4.3]{MR1007101}}
\label{le:unionaxe}
Tout segment compact d'un arbre réel minimal irréductible est contenu dans un axe de translation. \fin

\end{lemm}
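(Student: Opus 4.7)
Je propose de recouvrir le segment compact $[a,b]$ par un nombre fini d'axes de translation, puis de recoller ces axes en un seul via une construction de type ping-pong, l'irr\'eductibilit\'e servant \`a exclure les configurations d\'eg\'en\'er\'ees.

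\emph{Recouvrement fini.} La proposition pr\'ec\'edente assure que $T$ est l'union des axes $A_\gamma$ pour $\gamma$ hyperbolique. Pour chaque $x\in[a,b]$ je choisirai $\gamma_x$ hyperbolique avec $x\in A_{\gamma_x}$ ; alors $A_{\gamma_x}\cap[a,b]$ est un sous-segment de $[a,b]$ contenant $x$ dans son int\'erieur relatif. Par compacit\'e, j'en extrairai un recouvrement fini que je r\'eorganiserai en une cha\^ine $A_{\gamma_1},\dots,A_{\gamma_n}$ dont deux axes cons\'ecutifs partagent un sous-segment non d\'eg\'en\'er\'e.

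\emph{Recollement de deux axes.} Le point technique central sera l'\'enonc\'e suivant : si $\alpha,\beta\in\Gamma$ sont hyperboliques et si $A_\alpha\cap A_\beta$ contient un segment non d\'eg\'en\'er\'e, alors un \'el\'ement convenablement choisi du sous-groupe $\langle\alpha,\beta\rangle$, typiquement de la forme $\alpha^{\pm p}\beta^{\pm q}$, sera hyperbolique et son axe contiendra tout sous-segment prescrit de $A_\alpha\cup A_\beta$ compatible avec la topologie locale (les deux axes se s\'eparent en quatre demi-branches hors de leur intersection). L'argument reposera sur cette description locale et sur les formules classiques de longueur de translation d'un produit d'isom\'etries hyperboliques dans un arbre r\'eel (cf.\ Morgan--Shalen, Culler--Morgan). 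L'irr\'eductibilit\'e interviendra pr\'ecis\'ement ici pour exclure que $\alpha$ et $\beta$ fixent un bout commun, configuration qui ferait \'echouer le proc\'ed\'e en rendant les produits consid\'er\'es paraboliques.

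\emph{It\'eration et obstacle.} En appliquant r\'ecursivement ce recollement aux paires $(\gamma_1,\gamma_2)$, puis au r\'esultat et \`a $\gamma_3$, et ainsi de suite, j'obtiendrai apr\`es $n-1$ \'etapes un \'el\'ement hyperbolique $\gamma$ dont l'axe contient $\bigcup_i\bigl(A_{\gamma_i}\cap[a,b]\bigr)\supseteq[a,b]$. L'obstacle principal r\'esidera dans l'\'etape de recollement : il faudra mener soigneusement l'analyse par cas des dispositions relatives des axes (directions de translation, bouts \'eventuellement communs) et invoquer l'irr\'eductibilit\'e au moment pr\'ecis o\`u elle devient indispensable pour garantir l'hyperbolicit\'e du produit construit.
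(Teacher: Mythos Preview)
The paper does not actually prove this lemma: it cites \cite[Lemme~4.3]{MR1007101} and closes with~$\square$. There is thus no in-paper argument to compare against; your sketch should be measured against the standard proof in that reference.

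Your overall strategy is the right one, and your Step~2 (gluing two overlapping axes via a suitable product $\alpha^{\pm p}\beta^{\pm q}$) is indeed the heart of the matter. There is, however, a genuine gap in the covering step. From $x\in A_{\gamma_x}$ alone it does \emph{not} follow that $A_{\gamma_x}\cap[a,b]$ contains $x$ in its relative interior: at a branch point the chosen axis may leave $[a,b]$ on both sides, giving $A_{\gamma_x}\cap[a,b]=\{x\}$ (in the Cayley tree of $F_2=\langle\alpha,\beta\rangle$, take $[a,b]=[\alpha^{-1},\alpha]$ and $\gamma_e=\beta$). Without such neighbourhoods you have no open cover of $[a,b]$ and cannot extract a finite chain with non-degenerate overlaps. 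Showing that one \emph{can} choose an axis through~$x$ pointing in both directions of $[a,b]$ is true, but the argument is precisely your gluing lemma applied to axes through two nearby points on either side of~$x$; as written, the two steps are circular. The clean repair is to drop the covering entirely: pick one axis $A_\alpha\ni a$ and one axis $A_\beta\ni b$ (minimality), observe that in a tree $[a,b]$ decomposes as $[a,p]\cup[p,q]\cup[q,b]$ with $[a,p]\subset A_\alpha$, $[q,b]\subset A_\beta$ and $[p,q]$ the bridge (possibly degenerate) between the two axes, and run your product construction once on that single pair. Irreducibility then enters exactly where you say, when $A_\alpha$ and $A_\beta$ share an end.
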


Un exemple intéressant est celui de l'arbre réel associé à une lamination géodésique mesurée. Soit $\lambda = (\mathcal{L},\mu)$ une lamination géodésique mesurée sur une surface hyperbolique compacte $S$. Relevons cette lamination à $\tilde{S}\approx \mathbb{H}^{2}$ et considérons la pseudo-distance définie par $d_{(\mathcal{L},\mu)}(x,y)$ la borne inférieure des $\mu(\alpha)$, où $\alpha$ est une courbe transverse par morceaux reliant  $x$ à $y$. Notons  $T(\mathcal{L},\mu)$  \textit{l'espace métrique quotient} de $\tilde{S}$ i.e le quotient $\tilde{S}$ par la relation d'équivalence définie par $x\mathfrak{R}y$ si et seulement si $d_{\mathcal{L},\mu}(x,y)=0$, muni de la distance $\bar{d}_{(\mathcal{L},\mu)}$ induite par $d_{(\mathcal{L},\mu)}$.

\begin{prop}{\em \cite{MR1428059}}
\label{pro:dualminimal}
$(T(\mathcal{L},\mu), \bar{d}_{(\mathcal{L},\mu)})$ est un arbre r\'eel minimal et irr\'eductible. \fin

\end{prop}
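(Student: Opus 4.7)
Ma strat\'egie consiste \`a v\'erifier successivement la structure d'arbre r\'eel, puis la minimalit\'e et l'irr\'eductibilit\'e de l'action de $\Gamma = \pi_{1}(S)$.

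D'abord, pour la structure d'arbre r\'eel, le point cl\'e est que pour tous $x, y \in \tilde{S}$, la pseudo-distance $d_{(\mathcal{L},\mu)}(x,y)$ est r\'ealis\'ee par la mesure transverse du segment g\'eod\'esique $[x,y]$. Une in\'egalit\'e est imm\'ediate puisque $[x,y]$ est une courbe transverse par morceaux; pour l'autre, on approxime toute courbe transverse par morceaux reliant $x$ \`a $y$ par sa projection sur la g\'eod\'esique, en utilisant l'invariance de $\mu$ par homotopie transverse. Il en r\'esulte que les fibres de la projection $\pi : \tilde{S} \to T(\mathcal{L},\mu)$ sont exactement les feuilles de $\tilde{\mathcal{L}}$ et les adh\'erences des composantes connexes du compl\'ementaire $\tilde{S}\setminus\tilde{\mathcal{L}}$. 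La propri\'et\'e caract\'eristique des arbres r\'eels se d\'eduit alors d'un argument de topologie planaire: pour un triangle g\'eod\'esique de sommets $x,y,z$ dans $\mathbb{H}^{2}$, toute feuille de $\tilde{\mathcal{L}}$ qui coupe un c\^ot\'e du triangle en coupe n\'ecessairement un second, ce qui donne la condition de $0$-hyperbolicit\'e de Gromov sur le triangle m\'etrique image.

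Ensuite, pour la minimalit\'e, d'apr\`es la proposition pr\'ec\'edente il suffit de montrer que les axes de translation des \'el\'ements hyperboliques de $\Gamma$ recouvrent $T(\mathcal{L},\mu)$. On consid\`ere un $\gamma \in \Gamma$ dont l'axe $\mathrm{ax}(\gamma) \subset \mathbb{H}^{2}$ traverse transversalement le support de $\tilde{\mathcal{L}}$: la projection $\pi(\mathrm{ax}(\gamma))$ est un axe de translation pour $\gamma$ dans $T(\mathcal{L},\mu)$, de longueur non nulle \'egale \`a la mesure $\mu$ accumul\'ee le long d'une p\'eriode. La densit\'e dans $S$ des g\'eod\'esiques ferm\'ees transverses \`a $\lambda$, cons\'equence de la compacit\'e de $S$ et de la r\'ecurrence des feuilles, entra\^\i ne que l'union des $\pi(\mathrm{ax}(\gamma))$ est dense dans $T(\mathcal{L},\mu)$; comme c'est un convexe $\Gamma$-invariant ferm\'e non vide, il co\"\i ncide avec $T(\mathcal{L},\mu)$ tout entier. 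Pour l'irr\'eductibilit\'e, on raisonne par l'absurde: un bout de $T(\mathcal{L},\mu)$ globalement fix\'e par $\Gamma$ correspondrait \`a un point de $\partial_{\infty}\mathbb{H}^{2}$ ou \`a une direction asymptotique stabilis\'ee dans $\tilde{S}$, ce qui contredit le caract\`ere non \'el\'ementaire de l'action fuchsienne de $\Gamma$.

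La principale difficult\'e technique r\'eside dans la premi\`ere \'etape, \`a savoir l'identification de $d_{(\mathcal{L},\mu)}(x,y)$ avec la mesure transverse du segment g\'eod\'esique $[x,y]$: c'est ce lemme qui garantit simultan\'ement que le quotient est s\'epar\'e, g\'eod\'esique, et que ses fibres sont bien celles attendues. Ce point, d\'elicat pour des laminations \`a mesure transverse quelconque, est trait\'e de mani\`ere d\'etaill\'ee dans \cite{MR1428059}, auquel on renvoie pour les v\'erifications techniques.
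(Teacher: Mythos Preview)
The paper does not prove this proposition: it is quoted from \cite{MR1428059} and closed immediately with the end-of-proof symbol, with no argument given. There is therefore no ``paper's own proof'' to compare against; your proposal already goes further than the text by sketching the standard construction.

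Your sketch is essentially the classical one and is sound in outline. Two small imprecisions are worth flagging. In the minimality step, saying that the union of the $\pi(\mathrm{ax}(\gamma))$ is dense and is a closed $\Gamma$-invariant convex does not quite work as stated: an arbitrary union of axes has no reason to be closed. The cleaner version is to show directly that every point of $T(\mathcal{L},\mu)$ lies on some axis, by observing that any leaf of $\tilde{\mathcal{L}}$ (or any complementary region) is crossed by the lift of some closed geodesic of $S$ transverse to $\lambda$; this uses only the density of closed geodesics and avoids any closure argument. For irreducibility, the passage from a fixed end of $T(\mathcal{L},\mu)$ to a fixed point of $\partial_{\infty}\mathbb{H}^{2}$ is not as immediate as your phrasing suggests, since the projection $\pi$ does not extend to a boundary map in any simple way. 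A more direct route is to exhibit two hyperbolic elements of $\Gamma$ whose axes in $T(\mathcal{L},\mu)$ are disjoint (take two closed geodesics in $S$ each with positive $\mu$-intersection, crossing the lamination in regions separated by a complementary component); two hyperbolic isometries of a tree with disjoint axes cannot share a fixed end.

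These are refinements rather than genuine gaps; your final paragraph already defers the delicate points to \cite{MR1428059}, which is exactly what the paper itself does.
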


\section{Singularité initiale}
\label{sec:singu}
Soit $M$ un espace-temps $MGHC$ plat de dimension $2+1$. Nous avons d\'ej\`a rappel\'e dans l'introduction que $M$ est un espace-temps $M(S,\lambda)$ caract\'eris\'e par une lamination g\'eod\'esique mesur\'ee $(S, \lambda)$.
Il est aussi connu (cf \cite{mess1}) que $M$ s'identifie au quotient d'un domaine convexe $\Omega$ de Minkowski par un groupe d'isométrie $\Gamma\subset SO(1,2)\ltimes \mathbb{R}^{1,2}$ agissant librement et proprement discontinuement sur $\Omega$. De plus dans ce cas le temps cosmologique est un temps de Cauchy $C^{1}$ et concave.

Dans toute la suite on se place dans le cas de la dimension trois. On suppose que $M$ est non élémentaire c.\`a.d admet une hypersurface de Cauchy de genre $\geq 2$. Soient $\tau:\Omega\rightarrow]0,+\infty[$ la fonction temps cosmologique de $\Omega$ et $(\Phi^{t}_{\tau})$ son gradient associé. Soit $\tilde{S}_{a}=\tau^{-1}({a})$ les niveaux de $\tau$ pour tout réel strictement positif $a$. La restriction de la métrique lorentzienne globale à un niveau $\tilde{S}_{a}$ induit une métrique riemannienne qu'on note $g_{a}$. On note par $d^{\tau}_{a}$ la distance riemannienne associée à $g_{a}$. Soit $X_{\tau}$ l'espace des lignes du gradient cosmologique. Par la définition même de  l'hyperbolicité globale chaque ligne du gradient intersecte chaque surface de Cauchy de $\Omega$, en particulier, chaque $\tilde{S}_{a}$. Ceci induit une identification naturelle entre $X_{\tau}$ et chaque $\tilde{S}_{a}$ : celle qui envoie chaque ligne du gradient  sur son intersection avec $\tilde{S}_{a}$.  Les distances $d^{\tau}_{a}$ définissent alors une famille à un paramètre de distances sur $X_{\tau}$ qu'on note aussi par abus de notation $d^{\tau}_{a}$. Nous d\'efinissons ici la singularité initiale $\Sigma\subset\partial\Omega$ comme \'etant l'ensemble des points limites des lignes du gradient cosmologique (ceci est coh\'erent avec la d\'efinition donn\'ee pr\'ec\'edemment au vu de \cite{MR2170277}). Ceci définit la rétraction $r:X\rightarrow\partial\Omega$ qui à une ligne associe son point limite dans $\partial\Omega$.

\begin{prop}{\em \cite[Proposition~7.2]{MR2170277}}
Soit $X_{\tau}$ l'espace des lignes de gradient du temps cosmologique. Alors les distances $d^{\tau}_{a}$ convergent, quand $a$ tend vers $0$, pour la topologie compacte ouverte, vers une pseudo-distance $d^{\tau}_{0}$ vérifiant $d^{\tau}_{0}(x,y)=0$ si et seulement si $r(x)=r(y)$.\fin

\end{prop}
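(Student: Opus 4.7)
Ma strat\'egie s'appuie sur la description explicite du domaine $\Omega$ issue du mod\`ele de Mess--Bonsante. Chaque niveau $\tilde{S}_a$ admet un param\'etrage par $\mathbb{H}^2$ via l'inverse de l'application de Gauss: tout point $p \in \tilde{S}_a$ s'\'ecrit $p = \rho(\xi) + a\, \xi$, o\`u $\xi \in \mathbb{H}^2 \subset \mathbb{R}^{1,2}$ est le vecteur normal unitaire futur en $p$, et $\rho : \mathbb{H}^2 \to \Sigma$ est une application lipschitzienne ind\'ependante de $a$, codant le cocycle associ\'e \`a la lamination mesur\'ee $(\mathcal{L}, \mu)$. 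Puisque $\xi$ est constant le long de chaque ligne de gradient, cette param\'etrisation identifie simultan\'ement $X_\tau$ et chaque $\tilde{S}_a$ \`a $\mathbb{H}^2$, la r\'etraction $r : X_\tau \to \Sigma$ \'etant alors donn\'ee par $r(\xi) = \rho(\xi)$. Tout le probl\`eme se ram\`ene \`a \'etudier la famille $(d^\tau_a)_{a > 0}$ sur $\mathbb{H}^2$.

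La premi\`ere \'etape serait d'\'etablir la monotonie: $a \mapsto d^\tau_a(x,y)$ croissante en $a$ pour tous $x, y \in \mathbb{H}^2$. L'id\'ee g\'eom\'etrique est que la concavit\'e de $\tau$ (rappel\'ee ci-dessus) se traduit par la convexit\'e future des niveaux $\tilde{S}_a$, de sorte que le flot du gradient dilate (ou au moins ne contracte pas) les longueurs riemanniennes: tout chemin $\gamma_a \subset \tilde{S}_a$ se projette sur un chemin $\gamma_b \subset \tilde{S}_b$ (pour $b > a$) dont la longueur pour $g_b$ est au moins \'egale \`a celle de $\gamma_a$ pour $g_a$. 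Formellement, on calculerait la d\'eriv\'ee en $a$ du carr\'e de la norme minkowskienne de $dp_a(\xi) \cdot X$ et l'on v\'erifierait sa positivit\'e en exploitant la structure de cocycle de $\rho$. Cette monotonie assure l'existence ponctuelle de $d^\tau_0(x,y) = \lim_{a \to 0^+} d^\tau_a(x,y)$, qui est automatiquement une pseudo-distance.

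Pour passer de la convergence ponctuelle \`a la convergence compacte ouverte, il faut une majoration uniforme locale. Pour $x, y$ fix\'es, on rel\`eve la g\'eod\'esique hyperbolique $\alpha \subset \mathbb{H}^2$ de $x$ \`a $y$ au niveau $\tilde{S}_a$ via la param\'etrisation. La $g_a$-longueur du relev\'e se d\'ecompose en une contribution de l'ordre de $a \cdot d_{\mathbb{H}^2}(x,y)$ provenant du terme $a\, d\xi$ et une contribution provenant de $d\rho$, essentiellement \'egale \`a la masse transverse $\mu(\alpha)$ de la lamination relev\'ee rencontr\'ee par $\alpha$. On obtient donc $d^\tau_a(x,y) \leq a\, d_{\mathbb{H}^2}(x,y) + \mu(\alpha)$, borne uniforme en $a$ sur les compacts qui, combin\'ee \`a la monotonie, assure l'\'equicontinuit\'e de la famille et donc sa convergence compacte ouverte vers $d^\tau_0$.

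Pour le noyau de $d^\tau_0$: si $r(x) = r(y)$, alors $\rho(\xi_x) = \rho(\xi_y)$ et les points correspondants dans $\tilde{S}_a$ sont \`a distance minkowskienne d'ordre $a$; la borne sup\'erieure ci-dessus impose $d^\tau_a(x,y) = O(a) \to 0$. R\'eciproquement, si $r(x) \neq r(y)$, la r\'etraction $r$ factorise \`a travers le $\mathbb{R}$-arbre $T(\mathcal{L}, \mu)$ de la Proposition~\ref{pro:dualminimal}, et $\bar{d}_{(\mathcal{L},\mu)}(x,y) > 0$. Pour conclure, il faut minorer $d^\tau_a(x,y)$ par cette quantit\'e, ind\'ependamment de $a$: tout chemin dans $\tilde{S}_a$ reliant les deux points doit traverser transversalement les feuilles s\'eparant $x$ de $y$ dans $\mathbb{H}^2$, et chacune de ces feuilles contribue \`a la longueur riemannienne \`a hauteur de sa masse transverse. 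L'obstacle principal me semble r\'esider pr\'ecis\'ement dans cette minoration uniforme et dans la monotonie: les deux reposent sur une analyse fine de la contribution de $\rho$ \`a la m\'etrique $g_a$, car $\rho$ n'est lisse qu'au sens transverse \`a la lamination et poss\`ede une structure de cocycle exigeant un contr\^ole g\'eom\'etrique d\'elicat le long des feuilles de $\lambda$.
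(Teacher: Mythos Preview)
The paper does not prove this proposition: it is quoted verbatim from Bonsante's article (the \texttt{\textbackslash fin} immediately after the statement signals a cited result with no local proof). There is therefore no in-paper argument to compare against directly. That said, the paper does prove the analogous convergence statement for a general quasi-concave time $T$ (the proposition following Remark~\ref{rk:cosmoexpand}), and your strategy can be measured against that.

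Your outline is sound and is in fact close to Bonsante's original argument, but for the \emph{convergence} part you are working harder than necessary. Once monotonicity is established (your first step, which is exactly Proposition~\ref{pro:Texpand} specialised to $\tau$ via Remark~\ref{rk:cosmoexpand}), the paper obtains equicontinuity in one line from the triangle inequality:
\[
\left|d^{\tau}_{a}(x,y)-d^{\tau}_{a}(x',y')\right|\leq d^{\tau}_{a}(x,x')+d^{\tau}_{a}(y,y') \leq d^{\tau}_{1}(x,x')+d^{\tau}_{1}(y,y'),
\]
and Ascoli--Arzel\`a concludes. Your explicit upper bound $d^\tau_a(x,y) \leq a\, d_{\mathbb{H}^2}(x,y) + \mu(\alpha)$ is correct and informative, but not needed for mere uniform convergence on compacts.

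Where your explicit parametrisation $p = \rho(\xi) + a\,\xi$ genuinely earns its keep is in the characterisation of the kernel of $d^\tau_0$, which the abstract Ascoli argument cannot see. Your diagnosis of the difficulty is accurate: the implication $r(x)\neq r(y) \Rightarrow d^\tau_0(x,y)>0$ requires the uniform lower bound by the transverse measure, and this is indeed the delicate part of Bonsante's Proposition~7.2. Your sketch of that lower bound (chemins dans $\tilde{S}_a$ doivent traverser les feuilles s\'eparantes, chacune contribuant sa masse) is the right geometric picture; making it rigorous requires the structure of the cocycle $\rho$ along strata of the lamination, which is exactly the ``contr\^ole g\'eom\'etrique d\'elicat'' you flag at the end.
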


Notons $\hat{X}_\tau$ le cleaning de $X_\tau$ pour la pseudo-distance $d^\tau_0$.

Par ailleurs, on peut consid\'erer sur le bord $\partial\Omega$ du domaine $\Omega$ la pseudo-distance $d_\Omega$ qui \`a tout $x$, $y$ dans $\partial\Omega$ associe l'infimum des
longueurs\footnote{Les hyperplans support du convexe $\Omega$ sont tous de type espace ou d\'eg\'en\'er\'es,
les vecteurs tangents \`a une telle courbe sont donc bien de norme positive ou nulle.} pour la m\'etrique de Minkowski des courbes Lipschitz reliant $x$ \`a $y$. F. Bonsante
a d\'emontr\'e que tout point de $\partial\Omega$ est \`a pseudo-distance nulle d'un point de $\Sigma$ (cf \cite{MR2170277}) ; et que la pseudo-distance entre deux points distincts de $\Sigma$ est toujours non-nulle (cf \cite{MR2170277}).
Il y a donc une isom\'etrie entre le cleaning de $(\partial\Omega, d_\Omega)$ et $(\Sigma, d_\Sigma)$, o\`u $d_\Sigma$ d\'esigne la restriction de $d_\Omega$ \`a $d_\Sigma$, et
F. Bonsante a en fait d\'emontr\'e que $(\hat{X}_\tau, d^\tau_0)$ est isom\'etrique \`a $(\Sigma, d_\Sigma)$.

Enfin, rappelons que $M$ est caract\'eris\'e par une lamination g\'eod\'esique mesur\'ee $(S, \lambda)$.

\begin{prop}\label{bb}{\em \cite[Proposition~3.7.2]{MR2499272}}.
$(\hat{X}_\tau, d^\tau_0)$ et $(\Sigma, d_\Sigma)$ sont isom\'etriques \`a l'arbre réel $T(S, \lambda)$. \fin
\end{prop}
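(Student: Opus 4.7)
The plan is to reduce the problem to an explicit computation using Mess's parametrization of $\Omega$ by the lamination data. Since we already have the isometry $(\hat{X}_\tau, d^\tau_0) \cong (\Sigma, d_\Sigma)$, it suffices to construct a $\Gamma$-equivariant isometry between one of these spaces and $T(\mathcal{L},\mu)$; passing to the metric quotient then takes care of the third identification.

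The first step is to exploit Mess's description of $\Omega$: there is a $\Gamma_0$-equivariant map $\beta: \tilde{S} \to \mathbb{R}^{1,2}$, where $\Gamma_0 \subset \mathrm{SO}(1,2)$ is the Fuchsian holonomy of $S$ and $\tilde{S}$ is identified with the future hyperboloid $\mathbb{H}^2 \subset \mathbb{R}^{1,2}$, such that
\[
\Omega \;=\; \bigl\{\beta(x) + t\,x \,:\, x \in \tilde{S},\ t>0 \bigr\},
\]
each $x$ being viewed as a future-directed unit timelike vector, and such that the translation part of $\Gamma$ coincides with the cocycle defined by $\beta$. A direct computation using $\langle x,x\rangle = -1$ shows that the cosmological time of $\beta(x)+tx$ equals $t$, and that the past gradient line through this point is the straight timelike segment $\{\beta(x) + sx : 0 < s \leq t\}$. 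Hence the Gauss map sending each point of $\Omega$ to the future unit normal of its level set of $\tau$ is constant along gradient lines, and descends to a $\Gamma_0$-equivariant bijection $G: X_\tau \to \tilde{S}$ under which the retraction $r: X_\tau \to \Sigma$ is identified with $x \mapsto \beta(x)$.

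The second step is to show that under $G$ the pseudo-distance $d^\tau_0$ on $X_\tau$ corresponds to $d_{(\mathcal{L},\mu)}$ on $\tilde{S}$. By Bonsante's analysis of the cocycle, $\beta$ is locally constant on each component of $\tilde{S}\setminus\tilde{\mathcal{L}}$, and its variation across the leaves of $\tilde{\mathcal{L}}$ is prescribed by the transverse measure $\mu$. More precisely, for any piecewise transverse path $\alpha$ from $x$ to $y$, the composition $\beta\circ\alpha$ is a Lipschitz curve in $\partial\Omega$ whose Minkowski length equals $\mu(\alpha)$; taking the infimum over such paths yields $d_\Sigma(\beta(x),\beta(y)) \leq d_{(\mathcal{L},\mu)}(x,y)$. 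For the reverse inequality, one projects any Lipschitz curve in $\partial\Omega$ joining $\beta(x)$ to $\beta(y)$ to a piecewise transverse path in $\tilde{S}$ of no larger $\mu$-mass, using the structure of support hyperplanes of $\Omega$ along $\Sigma$.

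The main technical obstacle is the fine description of the boundary geometry of $\Omega$: one must verify that support hyperplanes along $\Sigma$ are either spacelike or null, and that the infinitesimal jump of $\beta$ across a leaf is a translation in a null direction whose Minkowski norm is precisely the elementary mass of $\mu$. Once this dictionary between Lipschitz curves in $\partial\Omega$ and piecewise transverse paths in $\tilde{S}$ is in place, the metric quotient — which is simultaneously the cleaning $\hat{X}_\tau$ and the dual tree $T(\mathcal{L},\mu)$ — delivers the desired three-way isometry, the $\Gamma$-equivariance being built into every step of the construction.
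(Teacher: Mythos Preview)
The paper does not give a proof of this proposition at all: it is stated with a citation to \cite[Proposition~3.7.2]{MR2499272} (Benedetti--Bonsante) and immediately closed with the \fin\ symbol. Your sketch is in fact an outline of the argument carried out in that reference and in Bonsante's earlier work: parametrize $\Omega$ via the Gauss map and the cocycle $\beta$, identify the retraction $r$ with $\beta$, and compare the intrinsic length metric on $\partial\Omega$ with the transverse measure on $\tilde{S}$. So the approach is the right one, and there is nothing to compare against in the present paper.

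There is, however, a genuine slip in your outline. You write that ``the infinitesimal jump of $\beta$ across a leaf is a translation in a null direction whose Minkowski norm is precisely the elementary mass of $\mu$.'' This is self-contradictory --- a null vector has Minkowski norm zero --- and it is also not what happens geometrically. A leaf $\ell$ of $\tilde{\mathcal{L}}$, viewed as a geodesic of $\mathbb{H}^2 \subset \mathbb{R}^{1,2}$, spans a timelike $2$-plane $P_\ell$; the bending cocycle translates in the direction of the \emph{spacelike} line $P_\ell^\perp$, and it is precisely the spacelike norm of this jump that recovers the elementary mass of $\mu$. The null directions you are thinking of appear in $\partial\Omega \setminus \Sigma$ (the lightlike faces attached to $\Sigma$), and they contribute zero to the intrinsic length $d_\Omega$, which is why every point of $\partial\Omega$ is at pseudo-distance zero from $\Sigma$. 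Once you correct this, the dictionary between Lipschitz curves in $\partial\Omega$ and transverse arcs in $\tilde{S}$ goes through as in the cited reference.
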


\section{Fonction temps quasi-concave}

\begin{defi}

Soit $T:\Omega\rightarrow \mathbb{R}_{+}$ une fonction temps de Cauchy $\Gamma$-invariante. Si les niveaux $T^{-1}({a})$ de $T$ sont convexes alors on dit que $T$ est quasi-concave.

\end{defi}

Dans le cas plat c'est équivalent à dire que le seconde forme fondamentale est positive i.e $\Pi(X,X)=\left\langle \nabla_{X}\mathfrak{n},X\right\rangle\geq 0$ où $\mathfrak{n}$ est la normale aux niveaux de $T$ orient\'ee vers le futur.

Soit $T:\Omega\rightarrow \mathbb{R}_{+}$ une fonction temps quasi-concave $\Gamma$ invariante de classe  $C^{2}$. En reparam\'etrant au but si besoin, on peut toujours supposer, ce qui sera notre convention, que $T$ prend toutes les valeurs dans $\mathbb{R}_+$.
Pour tout réel positif $a$, on note par $\tilde{\Sigma}_{a}$ le niveau $\{ T=a \}$ et par $g_a^T$ la m\'etrique 
induite sur $\tilde{\Sigma}_{a}$\footnote{Remarquons, sans que cel\`a importe pour le pr\'esent travail, qu'il d\'ecoule de la quasi-concavit\'e de $T$ et du Th\'eor\`eme Egregium de Gauss, que chaque $(\tilde{\Sigma}_{a}, g_a^T)$ est CAT$(0)$.}. On considère le champ de vecteurs $\mathfrak{\xi}=\frac{\nabla T}{\left|\nabla T\right|^{2}}$. Une vérification facile montre que le flot $\Phi_{T}^{t}$ associé à $\xi$ préserve les niveaux $\tilde{\Sigma}_{a}$ i.e. envoie le niveau $\tilde{\Sigma}_{a}$ sur $\tilde{\Sigma}_{a+t}$. 

\begin{prop}
\label{pro:Texpand}
Soit $\alpha :[a, b]\rightarrow\Omega$ une courbe de type espace contenue dans le passé de $\tilde{\Sigma}_{1}$. Alors  la longueur de la courbe $\alpha$ est inférieure à celle de $\alpha_{1}$ où $\alpha_{1}(s)=\Phi^{1-T(\alpha(s))}(\alpha(s))$ est la projetée de $\alpha$ sur le niveau $\tilde{\Sigma}_{1}$ le long des lignes du gradient cosmologique.

\end{prop}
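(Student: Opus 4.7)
The plan is to prove the stronger pointwise bound $|\alpha_1'(s)| \geq |\alpha'(s)|$ and then integrate. Setting $a(s) := T(\alpha(s))$, the first step is to decompose the velocity
\[
\alpha'(s) = u(s) + a'(s)\,\xi(\alpha(s)),
\]
where $u(s) := \alpha'(s) - a'(s)\,\xi(\alpha(s))$ satisfies $dT(u(s)) = 0$ and is therefore tangent to $\tilde{\Sigma}_{a(s)}$. Since $\xi$ is parallel to $\nabla T$ it is Lorentz-orthogonal to every level set, so $u \perp \xi$; because $\xi$ is timelike and $\alpha'$ is spacelike, the resulting orthogonal decomposition gives the Pythagorean-type inequality $|\alpha'(s)|^2 \leq |u(s)|^2$.

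The key observation is that the $\xi$-part of the decomposition disappears when one differentiates $\alpha_1$. Indeed, applying the chain rule to $\alpha_1(s) = \Phi_T^{1-a(s)}(\alpha(s))$ and using that $\xi$ is the generator of its own flow (so $d\Phi_T^t(\xi) = \xi$), the two contributions involving $a'(s)\,\xi$ cancel, leaving the clean formula
\[
\alpha_1'(s) = d\Phi_T^{1-a(s)}\bigl(u(s)\bigr).
\]
This reduces the problem to showing that $t \mapsto |d\Phi_T^t(u)|^2$ is non-decreasing on $[0,\, 1-a(s)]$ whenever $u$ is tangent to a level set.

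For this, given $u \in T\tilde{\Sigma}_a$ I extend it to a local vector field $V$ Lie-transported by $\xi$ (so $[\xi, V] = 0$); the field $V$ stays tangent to the level sets since $T \circ \Phi_T^t = T + t$. Then
\[
\frac{d}{dt}|V|^2 = (\mathcal{L}_\xi g)(V, V) = 2\,g(\nabla_V \xi, V),
\]
and writing $\xi = \phi\,\mathfrak{n}$ where $\phi = 1/\|\nabla T\|$ is the lapse and $\mathfrak{n}$ is the future unit normal, the orthogonality $g(\mathfrak{n}, V) = 0$ reduces the right-hand side to $2\phi\,\Pi(V, V)$, which is nonnegative by the quasi-concavity hypothesis. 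Evaluating at $t = 1 - a(s) \geq 0$ yields $|\alpha_1'(s)|^2 \geq |u(s)|^2 \geq |\alpha'(s)|^2$, and integrating in $s$ completes the proof.

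The computations are elementary, so the main obstacle is really the signed bookkeeping in the Lorentzian setting: one has to track carefully the relations between $\nabla T$ (past-directed timelike), the future unit normal $\mathfrak{n}$, and the lapse $\phi > 0$, so that the inequality $\Pi \geq 0$ coming from quasi-concavity translates into length being non-decreasing towards the future rather than the opposite. Once the signs are pinned down, the cancellation producing the identity $\alpha_1'(s) = d\Phi_T^{1-a(s)}(u(s))$ makes the rest essentially automatic.
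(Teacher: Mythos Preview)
Your argument is correct and follows the same strategy as the paper: both proofs first show that the flow $\Phi_T^t$ expands the length of vectors tangent to the level sets via the computation $\frac{d}{dt}|V|^2 = 2\phi\,\Pi(V,V)\geq 0$ (the paper writes $\lambda$ for your $\phi$), and then conclude by decomposing the velocity of the curve into a $\xi$-component and a level-tangent component. The only cosmetic difference is the direction of the decomposition: the paper expresses $\dot\alpha$ in terms of $\dot\alpha_1$ pushed back by $d\Phi^{T(s)-1}$, whereas you push $u(s)$ forward and exhibit the cancellation $\alpha_1'(s)=d\Phi_T^{1-a(s)}(u(s))$; these are equivalent formulations of the same identity.
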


\begin{proof}
Soit $X$ un champ de vecteurs  tangent à $\tilde{\Sigma}_{1}$. On le prolonge en un champ de vecteurs $\widehat{X}$ via le flot $(\Phi_{T}^{t})$ .

\[\begin{aligned}
\frac{1}{2}\frac{d}{dt}\left|D\Phi^{t}(X)\right|_{t=0}^{2}=\frac{1}{2}\xi.\left|\widehat{X}\right|^{2}&=\left\langle X,\nabla_{\xi}\widehat{X}\right\rangle &
& =\left\langle X, \nabla_{X}\xi\right\rangle &
& =\left\langle X,\nabla_{X}\lambda(\mathfrak{n})\right\rangle &
\end{aligned}\]

avec $\lambda=\frac{1}{\sqrt{-\left|\nabla T\right|^{2}}}$. On en déduit :

\[\begin{aligned}
\frac{1}{2}\frac{d}{dt}\left|D\Phi^{t}(X)\right|_{t=0}^{2} &= \left\langle X,\lambda\nabla_{X}\mathfrak{n}+X(\lambda)\mathfrak{n}\right\rangle &
&=\lambda\left\langle X,\nabla_{X}\mathfrak{n}\right\rangle &
&=\lambda\Pi(X,X)\geq 0 &
\end{aligned}\]

Ceci implique que si $\alpha_{a}$ est une courbe incluse dans $\tilde{S}_{a}$ alors $\Phi^{t}(\alpha_{a})$ est de longueur plus grande que celle de $\alpha_{a}$.
Soit maintenant $\alpha$ une courbe dans le passé de $\tilde{S}_{1}$. Soit
$\alpha_{1}(s)=\Phi^{1-T(s)}(\alpha(s))$, on sait que $\dot{\alpha}(s)=-T'(s)\xi_{\alpha(s)}+d_{\alpha_{1}(s)}\Phi^{1-T(s)}.\alpha_{1}(s)$. Par suite $\left|\dot{\alpha}(s)\right|^{2}\leq\left|\dot{\alpha}_{1}(s)\right|^{2}$ ce qui donne le résultat voulu.

\end{proof}

\begin{rmq}
\label{rk:cosmoexpand}
Bien que le temps cosmologique ne soit pas $C^2$, cette proposition reste vraie lorsque le temps quasi-concave est le temps cosmologique $\tau$, voir \cite[Proposition~8.1]{2008arXiv0804.1053B} .

\end{rmq}

Une conséquence immédiate est la proposition suivante:

\begin{prop}
Soit $X_{T}$ l'espace des lignes du champs de vecteurs $\xi$ qu'on identifie topologiquement à $\tilde{\Sigma}_{1}$. Alors $d_{a}^{T}$ converge pour la topologie compacte ouverte de $X_T$ vers une pseudo-distance $d_{0}^{T}$.
\end{prop}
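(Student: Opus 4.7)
Je suivrais la strat�gie de Bonsante pour le temps cosmologique. La cl� est d'�tablir la monotonie de $d_a^T$ en $a$, fond�e sur la proposition~\ref{pro:Texpand}, puis de conclure par un argument d'�quicontinuit�.

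D'abord, la preuve de la proposition~\ref{pro:Texpand} fournit en r�alit� la non-d�croissance de $t\mapsto |D\Phi_T^t X|^2$ pour tout champ $X$ tangent aux niveaux de $T$, le calcul restant valable en tout point gr�ce � la positivit� de la seconde forme fondamentale $\Pi$ sur chaque $\tilde{\Sigma}_a$ (quasi-concavit� de $T$). Par cons�quent, pour $0<a\leq b$, le flot r�trograde $\Phi_T^{a-b}:\tilde{\Sigma}_b\to\tilde{\Sigma}_a$ est $1$-lipschitzien. Via l'identification $X_T\cong\tilde{\Sigma}_1$ donn�e par les lignes de $\xi$, ceci se traduit par $d_a^T(x,y)\leq d_b^T(x,y)$ pour tous $x,y\in X_T$. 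Ainsi, $a\mapsto d_a^T(x,y)$ est positive et non-d�croissante, donc admet une limite lorsque $a\to 0^+$~; je pose $d_0^T(x,y):=\inf_{a>0}d_a^T(x,y)$. Les axiomes d'une pseudo-distance (positivit�, sym�trie, in�galit� triangulaire, nullit� sur la diagonale) passent trivialement � la limite.

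Pour promouvoir la convergence ponctuelle en convergence uniforme sur les compacts, j'invoquerais un argument d'�quicontinuit�. L'in�galit� triangulaire combin�e � la monotonie donne, pour tous $x,y,x',y'\in X_T$ et tout $0<a\leq 1$,
\[
\bigl|d_a^T(x,y)-d_a^T(x',y')\bigr|\leq d_a^T(x,x')+d_a^T(y,y')\leq d_1^T(x,x')+d_1^T(y,y').
\]
La famille $\{d_a^T\}_{0<a\leq 1}$ est donc uniform�ment $1$-lipschitzienne, en particulier �quicontinue, sur $X_T\times X_T$ muni de la distance produit issue de $d_1^T$. Conjugu�e � la convergence ponctuelle �tablie plus haut, l'�quicontinuit� d�livre la convergence uniforme sur tout compact par un argument standard de type Ascoli, ce qui est exactement la convergence pour la topologie compacte-ouverte requise (et garantit au passage la continuit� de $d_0^T$).

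L'obstacle principal est la monotonie initiale~: il faut extraire du calcul de la proposition~\ref{pro:Texpand} un �nonc� valable en tout point et dans la bonne direction (contraction lors du retour vers le pass�). Une fois cette monotonie acquise, les �tapes de limite ponctuelle et d'�quicontinuit� sont routini�res. Notons que cet argument n'invoque nulle part la nature particuli�re du temps choisi~: seule la quasi-concavit� intervient, ce qui fait pr�sager que la m�me d�marche s'appliquera au temps cosmologique moyennant la remarque~\ref{rk:cosmoexpand}.
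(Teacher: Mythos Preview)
Your proposal is correct and follows essentially the same approach as the paper: monotonicity of $a\mapsto d_a^T$ (deduced from Proposition~\ref{pro:Texpand}) gives pointwise convergence, and the equicontinuity estimate $|d_a^T(x,y)-d_a^T(x',y')|\leq d_1^T(x,x')+d_1^T(y,y')$ upgrades this to uniform convergence on compacts. Your presentation is in fact slightly cleaner than the paper's, which invokes Ascoli--Arzel\`a (requiring local boundedness) rather than combining equicontinuity directly with the already-established pointwise limit.
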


\begin{proof}
Pour tout $x$, $y$ dans $X_T$, la famille $d_a^T(x,y)$ d\'ecroit quand $a \to 0$, donc admet une limite $d_0^T(x,t)$. Pour montrer la convergence uniforme,
on reprend les  arguments de   la proposition~ 7.1 dans \cite{MR2170277}. D'apr\`es la Proposition~\ref{pro:Texpand}, pour tout $x$, $y$, $x'$, $y'$ dans $Y$ on a:
$$\left|d^{T}_{a}(x,y)-d^{T}_{a}(x',y')\right|\leq d^{T}_{a}(x,x')+d^{T}_{a}(y,y') \leq d^{T}_{1}(x,x')+d^{T}_{1}(y,y')$$
La  famille $(d^{T}_{a})_{a<1}$  est donc une famille équicontinue d'applications de l'espace m\'etrique produit $(X_T \times X_T, d^T(.,.) + d^T(.,.))$ vers $\mathbb R$. Comme la restriction de $d^{T}_{1}$ \`a $K \times K$  pout tout compact $K$ de $X_T$
est uniform\'ement major\'ee,
cette famille $(d^{T}_{a})_{a<1}$  est aussi localement bornée. La proposition d\'ecoule alors du théorème d'Ascoli Arzéla.
\end{proof}

Comme chaque ligne du gradient du temps cosmologique intersecte chaque niveau du temps $T$ en un unique point, on peut aussi identifier l'espace $X_\tau$ des lignes de gradient du temps cosmologique $\tau$ avec chaque niveau $\tilde{\Sigma}_{a}$ de $T$.
Ceci permet de définir une autre famille de distances sur l'espace $X_{\tau}$. On les note  $\delta_{a}^{T}$. On définit de manière similaires des distances $\delta_{a}^{\tau}$ sur l'espace $X_{T}$. Sous ces hypoth\`eses, on a le résultat suivant

\begin{prop}
\label{pro:compare}
Il existe une suite extraite $(\delta_{a_n}^{\tau})_{n \in \mathbb N}$ (resp $(\delta_{a_n}^{T})_{n \in \mathbb N}$ ) convergeant  vers une pseudo-distance $\delta^{\tau}_{0}$ (resp $\delta^{T}_{0}$) sur $X_T$ (resp $X_\tau$) pour la topologie uniforme sur les compacts. De plus, $$\delta^{\tau}_{0}\leq d^{T}_{0}$$ $$\delta^{T}_{0}\leq d^{\tau}_{0}$$
\end{prop}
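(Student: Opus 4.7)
Mon plan proc\`ede en deux parties: (i) l'existence d'une sous-suite convergente via Ascoli--Arzel\`a, dans l'esprit de la proposition pr\'ec\'edente, et (ii) les in\'egalit\'es de comparaison, obtenues en projetant des courbes presque minimisantes d'une famille de niveaux sur l'autre, le long du flot gradient appropri\'e.

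Pour (i), j'interpr\`ete $\delta_a^\tau$ sur $X_T$ comme le tir\'e en arri\`ere de $d_a^\tau$ par l'identification $\phi_a : X_T \to X_\tau$ qui envoie une ligne du gradient de $T$ not\'ee $x$ sur la ligne du gradient de $\tau$ passant par $x \cap \tilde{S}_a$. L'in\'egalit\'e triangulaire $|\delta_a^\tau(x,y) - \delta_a^\tau(x',y')| \leq \delta_a^\tau(x,x') + \delta_a^\tau(y,y')$ fournit l'\'equicontinuit\'e d\`es qu'on dispose d'une borne locale uniforme. Par la d\'ecroissance $d_a^\tau \leq d_1^\tau$ (Remarque~\ref{rk:cosmoexpand}), cette borne se r\'eduit \`a contr\^oler $d_1^\tau(\phi_a(x),\phi_a(y))$ uniform\'ement en $a \in (0,1]$ pour $x,y$ dans un compact $K \subset X_T$; ceci d\'ecoule du fait que $\phi_a(x)$ reste dans une r\'egion compacte de $X_\tau$ lorsque $a$ varie (les lignes du gradient des deux fonctions temps issues du voisinage de $K$ restent dans une r\'egion born\'ee de $\Omega$ contr\^ol\'ee par $\tilde{S}_1$). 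Ascoli--Arzel\`a fournit alors une sous-suite convergente $\delta_{a_n}^\tau$; l'argument sym\'etrique donne celle de $\delta_{a_n}^T$ sur $X_\tau$.

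Pour (ii), \'etablissant par exemple $\delta_0^\tau \leq d_0^T$: \'etant donn\'es $x,y \in X_T$ et $a > 0$ petit, je choisis une courbe $\alpha \subset \tilde{\Sigma}_a$ reliant $\pi_a^T(x)$ \`a $\pi_a^T(y)$ de longueur proche de $d_a^T(x,y)$, puis je pose $b \in (0, \min_s \tau(\alpha(s))]$ et je projette $\alpha$ sur $\tilde{S}_b$ par le flot gradient de $T$: $\alpha'(s) = \Phi_T^{t(s)}(\alpha(s))$ o\`u $t(s) \leq 0$ est l'unique r\'eel tel que $\tau(\alpha'(s)) = b$. Par construction, $\alpha'$ relie $\pi_b^\tau(x)$ \`a $\pi_b^\tau(y)$. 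Comme $D\Phi_T^{t(s)}$ envoie les niveaux de $T$ sur les niveaux de $T$, eux-m\^emes orthogonaux (au sens de Minkowski) \`a $\xi_T$, le terme crois\'e dans la d\'ecomposition $\dot\alpha'(s) = D\Phi_T^{t(s)}(\dot\alpha(s)) + t'(s)\,\xi_T(\alpha'(s))$ s'annule, d'o\`u
\[
|\dot\alpha'(s)|^2 = \bigl|D\Phi_T^{t(s)}(\dot\alpha(s))\bigr|^2 + (t'(s))^2 |\xi_T|^2 \leq \bigl|D\Phi_T^{t(s)}(\dot\alpha(s))\bigr|^2
\]
puisque $|\xi_T|^2 < 0$. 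Comme $t(s) \leq 0$, la Proposition~\ref{pro:Texpand} appliqu\'ee en temps n\'egatif donne $|D\Phi_T^{t(s)}(\dot\alpha(s))|^2 \leq |\dot\alpha(s)|^2$. En int\'egrant, la longueur de $\alpha'$ est inf\'erieure \`a celle de $\alpha$, et donc $\delta_b^\tau(x,y) \leq d_a^T(x,y)$.

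L'obstacle principal r\'eside dans le passage \`a la limite, en coordonnant les deux param\`etres $a$ et $b$. J'extrais une sous-suite commune $a_n \to 0$ avec $b(a_n) \to 0$ (garanti par le fait que $\pi_{a_n}^T(K)$ s'accumule sur la singularit\'e initiale, o\`u $\tau \to 0$), telle que $\delta_{b(a_n)}^\tau$ et $d_{a_n}^T$ convergent simultan\'ement (vers $\delta_0^\tau$ et $d_0^T$ respectivement, uniform\'ement sur le compact); l'in\'egalit\'e $\delta_{b(a_n)}^\tau \leq d_{a_n}^T$ survit \`a la limite. L'argument miroir, avec les r\^oles de $T$ et $\tau$ \'echang\'es et invoquant la Remarque~\ref{rk:cosmoexpand} pour l'expansion cosmologique, \'etablit $\delta_0^T \leq d_0^\tau$.
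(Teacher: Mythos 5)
Votre d\'emonstration est correcte et suit pour l'essentiel la m\^eme strat\'egie que celle du texte : extraction par Ascoli--Arzel\`a (\'equicontinuit\'e via l'in\'egalit\'e triangulaire plus une borne locale), puis comparaison des pseudo-distances limites en projetant une courbe presque minimisante d'une famille de niveaux sur l'autre le long du flot gradient appropri\'e, la monotonie des longueurs \'etant exactement le contenu de la Proposition~\ref{pro:Texpand} et de la Remarque~\ref{rk:cosmoexpand} (votre d\'ecomposition orthogonale explicite ne fait que red\'erouler la preuve de la Proposition~\ref{pro:Texpand} en temps n\'egatif). La seule divergence, mineure, concerne la borne locale : vous l'obtenez via la compacit\'e de $\phi_a(K)$ dans $X_\tau$ uniform\'ement en $a$ --- affirmation vraie mais laiss\'ee sans justification, et qui demande un petit argument sur la r\'etraction $r$ --- alors que le texte la tire directement de la m\^eme in\'egalit\'e de comparaison ($\delta^{T}_{a}\leq d^{\tau}_{a_{0}}$ sur le compact, $a_{0}$ \'etant choisi de sorte que les points concern\'es restent dans le pass\'e de $\tilde{S}_{a_{0}}$), ce qui \'evite cette \'etape.
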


\begin{proof}

Soit $K$ un compact de $X_{\tau}$ et soit $a>0$. On sait que $K$ s'identifie à un compact de $\tilde{\Sigma}_{a}$. Donc il existe un $a_{0}>1$ tel que $\tau(K)<a_{0}$. On a que  $\delta^{T}_{a}(x,y)\leq d^{\tau}_{a_{0}}(x,y)$ pour tout $x$ et $y$ dans $K$. Donc la famille $(\delta^{T}_{a})_{a<1}$ est localement bornée.

D'autre part, comme $$\left|\delta^{T}_{a}(x,y)-\delta^{T}_{a}(x',y')\right|\leq \delta^{T}_{a}(x,x')+\delta^{T}_{a}(y,y')$$
on a  $$\left|\delta^{T}_{a}(x,y)-\delta^{T}_{a}(x',y')\right|\leq d_{a_0}^{\tau}(x,x')+ d_{a_0}^{\tau}(y,y')$$ pour tout $x$, $y$, $x'$, $y'$ dans $X_{\tau}$.
D'où l'équicontinuité de la famille $(\delta^{T}_{a})_{a<1}$. On en déduit que $(\delta^{T}_{a})_{a<1}$ admet une valeur d'adhérence $\delta^{T}_{0}$ pour la topologie compacte ouverte.

De plus, on sait que pour tout $\alpha>0$, il existe $a_{\alpha}>0$ tel que les niveaux  $\tilde{\Sigma}_a$ pour $a<a_{\alpha}$ soient dans le pass\'e du niveau cosmologique $\tilde{S}_a$, ce qui implique $\delta^{T}_{a}(x,y)\leq d_{\alpha}^{\tau}(x,y)$.
Lorsque $\alpha$ tend vers $0$, les temps $a_\alpha$ tendent vers $0$; on a donc $\delta^{T}_{0}\leq d^{\tau}_{0}$. 

La démonstration pour  les $\delta_{a}^{\tau}$ est similaire.
\end{proof}

Rappelons que le \emph{spectre} d'un espace m\'etrique $(X,d)$ muni d'une action par isom\'etries d'un groupe $\Gamma$ est l'application qui \`a tout \'el\'ement $\gamma$ de $\Gamma$ associe $\mbox{inf}_{x \in X} d(x, \gamma.x)$.

Une conséquence immédiate de la Proposition~\ref{pro:compare}:

\begin{cor}
\label{cor:spectre}
Les spectres $l_{0,\tau}$, $l_{0,T}$, $l'_{0,\tau}$, $l'_{0,T}$ spectres respectifs de $(X_\tau, d^{\tau}_{0})$, $(X_T, d^{T}_{0})$, $(X_T, \delta^{\tau}_{0})$ et $(X_\tau, \delta^{T}_{0})$ munis de l'action isom\'etrique de $\Gamma$ sont deux-\`a-deux \'egaux.
\end{cor}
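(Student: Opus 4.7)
Le plan repose sur deux observations. \emph{(i)} Pour chaque $a>0$, les quatre distances se regroupent en deux classes d'isom\'etrie $\Gamma$-\'equivariantes, r\'eduisant \`a deux le nombre de spectres distincts \`a temps fini. \emph{(ii)} Quand $a\to 0$, on peut faire passer les spectres \`a la limite en combinant la monotonie dont disposent $d^\tau_a,d^T_a$ avec la semi-continuit\'e sup\'erieure du spectre sous convergence ponctuelle, qui s'applique m\^eme aux familles $\delta^\tau_a,\delta^T_a$ pour lesquelles la monotonie fait a priori d\'efaut.

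Pour \emph{(i)}, l'application qui envoie chaque ligne de gradient sur son unique point d'intersection avec le niveau $\tilde{S}_a$ du temps cosmologique fournit des isom\'etries $\Gamma$-\'equivariantes $(X_\tau,d^\tau_a)\simeq(\tilde{S}_a,g_a)\simeq(X_T,\delta^\tau_a)$ : ces deux espaces ont donc le m\^eme spectre, qu'on notera $\sigma^\tau_a(\gamma)$. En utilisant sym\'etriquement les niveaux $\tilde{\Sigma}_a$ du temps $T$, on obtient un spectre commun $\sigma^T_a(\gamma)$ pour $(X_T,d^T_a)$ et $(X_\tau,\delta^T_a)$.

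Pour \emph{(ii)}, on sait que $d^\tau_a\searrow d^\tau_0$ et $d^T_a\searrow d^T_0$ (preuve de la proposition pr\'ec\'edente, Remarque~\ref{rk:cosmoexpand}, \cite{MR2170277}). La d\'ecroissance donne $l_{0,\tau}(\gamma)\leq\sigma^\tau_a(\gamma)$ ; r\'eciproquement, pour tout $\epsilon>0$ un quasi-minimiseur $x_\epsilon$ de $d^\tau_0(\cdot,\gamma\cdot)$ donne $\sigma^\tau_a(\gamma)\leq d^\tau_a(x_\epsilon,\gamma x_\epsilon)\to d^\tau_0(x_\epsilon,\gamma x_\epsilon)\leq l_{0,\tau}(\gamma)+\epsilon$. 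Ainsi $\sigma^\tau_a(\gamma)\to l_{0,\tau}(\gamma)$, et sym\'etriquement $\sigma^T_a(\gamma)\to l_{0,T}(\gamma)$.

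Il ne reste qu'\`a comparer ces limites aux spectres $l'_{0,\tau},l'_{0,T}$. Le long de la sous-suite $(a_n)$ de la Proposition~\ref{pro:compare}, on a $\sigma^\tau_{a_n}(\gamma)\leq\delta^\tau_{a_n}(x_0,\gamma x_0)\to\delta^\tau_0(x_0,\gamma x_0)$ pour tout $x_0\in X_T$, donc $\limsup_n\sigma^\tau_{a_n}(\gamma)\leq l'_{0,\tau}(\gamma)$. Combin\'ee \`a $\sigma^\tau_{a_n}(\gamma)\to l_{0,\tau}(\gamma)$, cette in\'egalit\'e donne $l_{0,\tau}(\gamma)\leq l'_{0,\tau}(\gamma)$, et sym\'etriquement $l_{0,T}(\gamma)\leq l'_{0,T}(\gamma)$. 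Jointes aux in\'egalit\'es $l'_{0,\tau}\leq l_{0,T}$ et $l'_{0,T}\leq l_{0,\tau}$ de la Proposition~\ref{pro:compare}, elles ferment la cha\^ine $l_{0,\tau}\leq l'_{0,\tau}\leq l_{0,T}\leq l'_{0,T}\leq l_{0,\tau}$, prouvant l'\'egalit\'e des quatre spectres. Le point le plus d\'elicat est pr\'ecis\'ement cette semi-continuit\'e sup\'erieure en l'absence de monotonie pour les $\delta$, mais elle d\'ecoule directement de la convergence ponctuelle des distances sur les paires $(x_0,\gamma x_0)$.
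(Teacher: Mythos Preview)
Your proof is correct and follows essentially the same approach as the paper: both derive the chain $l_{0,\tau}\leq l'_{0,\tau}\leq l_{0,T}\leq l'_{0,T}\leq l_{0,\tau}$ by combining the comparison inequalities $\delta^{\tau}_0\leq d^{T}_0$, $\delta^{T}_0\leq d^{\tau}_0$ of Proposition~\ref{pro:compare} with the finite-time $\Gamma$-equivariant isometry between $(X_\tau,d^\tau_a)$ and $(X_T,\delta^\tau_a)$ (both being the level $\tilde{S}_a$) together with the monotonicity of $d^\tau_a,d^T_a$. The only difference is packaging: you introduce the intermediate spectra $\sigma^\tau_a,\sigma^T_a$ and prove they converge, whereas the paper argues directly that $\delta^{T}_{a}(x,\gamma x)=d^{T}_{a}(x_a,\gamma x_a)\geq l_{a,T}(\gamma)\geq l_{0,T}(\gamma)$ and passes to the limit along the subsequence, which is slightly more economical but equivalent.
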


\begin{proof}
Soit $\gamma$ un \'el\'ement de $\Gamma$.
D'apr\`es la Proposition~\ref{pro:compare}, on  $\delta^{T}_{0}\leq d^{\tau}_{0}$. Donc, pour tout \'el\'ement $x$ de $X_\tau$, on a
$l'_{0,T}(\gamma) \leq \delta^{T}_{0}(x,\gamma.x)\leq d^{\tau}_{0}(x,\gamma.x)$. Comme $x$ est arbitraire, on obtient $$l'_{0,T}(\gamma) \leq l_{0,\tau}(\gamma)$$

D'autre part, pour tout $x$ dans  $X_\tau$, et tout $a>0$, la distance $\delta^{T}_{a}(x,\gamma.x)$ est la distance $d^{T}_{a}(x_{a},\gamma.x_{a})$ entre $x_a$ et $\gamma.x_a$ dans le niveau $\tilde{\Sigma}_a$, o\`u $x_a$ est l'intersection $x_{a}$ entre $\tilde{S}_a$
et la ligne de gradient cosmologique $x$. Ceci implique que $\delta^{T}_{a}(x,\gamma.x)\geq l_{a,T}(\gamma) \geq l_{0,T}(\gamma)$. Comme $a$ est arbitraire, on a donc $\delta^{T}_{0}(x,\gamma.x)\geq l_{0,T}(\gamma)$. On en déduit:
$$l_{0,T}(\gamma)\leq l'_{0,T}(\gamma)\leq l_{0,\tau}(\gamma)$$
De manière analogue, en \'echangeant les r\^oles de $\tau$ et $T$, on montre:
$$l_{0,\tau}(\gamma)\leq l'_{0,\tau}(\gamma)\leq l_{0,T}(\gamma)$$
Ce qui donne l'égalité $l_{0,T}(\gamma)=l_{0,\tau}(\gamma)$.
\end{proof}

\textit{D\'emonstration du Théorème \ref{aa}. }
Remarquons tout d'abord que la convergence uniforme sur les compacts entraine la convergence de Hausdorff-Gromov équivariante \cite{MR958589}. Pour d\'emontrer le th\'eor\`eme, nous allons d\'emontrer que toutes les limites au sens de la topologie uniforme
sur les compacts $\delta_0^T$ de suites extraites de la famille $(\delta_a)^T_{a>0}$ sont \'egales \`a la pseudo-distance $d_0^\tau$ sur $X_\tau$.
D'apr\`es la Proposition \ref{bb}  on sait que les $(X_\tau, d^{\tau}_{a})$ convergent pour la topologie uniforme sur les compacts vers $(X_\tau, d_0^\tau)$ de sorte que le cleaning $(\hat{X}_\tau, d_0^\tau)$ soit isom\'etrique \`a l' arbre réel $(T,d)$ dual à la lamination géodésique mesurée $(S, \lambda)$. D'apr\`es la Proposition~\ref{pro:dualminimal}, $(T,d)$ est minimal et irréductible. Soient $x$, $y$ deux \'el\'ements de $X_\tau$. D'apr\`es le Lemme~\ref{le:unionaxe} il existe un \'el\'ement $\gamma$ de $\Gamma$ dont l'axe de translation
$A_{\gamma}$ contient le segment $[\bar{x}, \bar{y}]$, o\`u $\bar{x}$, $\bar{y}$ sont les projet\'es de $x$, $y$ dans le cleaning. Quitte \`a remplacer $\gamma$ par une de ses puissances, on peut supposer que $[\bar{x}, \bar{y}]$ est contenu dans le segment $[\bar{x}, \gamma.\bar{x}]$. Or, d'apr\`es la Proposition~\ref{pro:compare}, et comme $\bar{x}$ est dans $A_\gamma$:
$$l_{0,T}(\gamma)\leq\delta^{T}_{0}(\bar{x},\gamma.\bar{x})\leq d^{\tau}_{0}(\bar{x},\gamma.\bar{x})=l_{0,\tau}(\gamma)$$
D'apr\`es le Corollaire~\ref{cor:spectre}, les deux termes aux extr\'emit\'es de cette suite d'in\'egalit\'es sont \'egaux, donc toutes les in\'egalit\'es interm\'ediares sont \'egales, En particulier:
$$\delta^{T}_{0}(\bar{x},\gamma.\bar{x}) = d^{\tau}_{0}(\bar{x},\gamma.\bar{x})$$
Donc:
$$d^\tau_0(\bar{x}, \bar{y}) + d^\tau_0(\bar{y}, \gamma.\bar{x}) = d^\tau_0(\bar{x}, \gamma.\bar{x}) = \delta^{T}(\bar{x}, \gamma.\bar{x}) \leq \delta^{T}_0(\bar{x}, \bar{y}) + \delta^{T}_0(\bar{y}, \gamma.\bar{x})$$
ce qui donne comme voulu, en utilisant \`a nouveau la Proposition~\ref{pro:compare}, l'\'egalit\'e $\delta^{T}_{0}(x,y)=d^{\tau}_{0}(x,y)$.	
\fin

\nocite{*}

\bibliographystyle{plain}

\bibliography{bibliographietest1}

\end{document}